\newcommand{\cmark}{\ding{51}}%
\newcommand{\xmark}{\ding{55}}%
\newdimen\arrowsize
\newcommand{\independent}{\mbox{${}\perp\mkern-11mu\perp{}$}}
\newcommand{\notindependent}{\mbox{${}\not\!\perp\mkern-11mu\perp{}$}}
\newcommand{\B}[1]{\mathbf{#1}}
\newcommand{\PA}[2][]{{\B{PA}}^{#1}_{#2}}
\newcommand{\ND}[2][]{{\B{ND}}^{#1}_{#2}}
\newcommand{\given}{\,|\,}
\newtheorem{corollary}{Corollary}
\newtheorem{lemma}{Lemma}
\newtheorem{example}{Example}
\newtheorem{definition}{Definition}
\newtheorem{proposition}{Proposition}
\begin{document}
\title{On the Intersection Property of Conditional Independence and its Application to Causal Discovery}
\author{Jonas Peters\\ peters@stat.math.ethz.ch\\
ETH Zurich} 

\maketitle
\begin{abstract}
This work investigates the intersection property of conditional independence. It states that for random variables $A,B,C$ and $X$ we have that $X \independent A \, \given \, B,C$ 
and $X \independent B \, \given \, A,C$ implies $X \independent (A,B) \, \given \,C$. Here, ``$\independent$'' stands for statistical independence. 
Under the assumption that the joint distribution has a continuous density, we provide necessary and sufficient conditions under which the intersection property holds. The result has direct applications to causal inference: it leads to strictly weaker conditions under which the graphical structure becomes identifiable from the joint distribution of an additive noise model.
\end{abstract}

\section{Introduction}
\subsection{Application to Causal Inference}
Inferring causal relationships is a major challenge in science. In the last decades considerable effort has been made in order to learn causal statements from observational data. Causal discovery methods make assumptions that relate the joint distribution with properties of the causal graph. Constraint-based or independence-based methods \citep{Pearl2009, Spirtes2000} and some score-based methods \citep{Chickering2002, Heckerman1999} assume the Markov condition and faithfulness. A distribution is said to be Markov with respect to a directed acyclic graph (DAG) $G$ if each d-separation in the graph implies the corresponding (conditional) independence; the distribution is faithful with respect to $G$ if the reverse statement holds. These two assumptions render the Markov equivalence class of the correct graph identifiable from the joint distribution, i.e. the skeleton and the v-structures of the graph can be inferred from the joint distribution \citep{Verma1991}.
Methods like LiNGAM \citep{Shimizu2006} or additive noise models \citep{Hoyer2008, Peters2013anm} assume the Markov condition, too, but do not require faithfulness; instead, these methods assume that the structural equations come from a restricted model class (e.g. linear with non-Gaussian noise or non-linear with additive Gaussian noise). In order to prove that the directed acyclic graph (DAG) is identifiable from the joint distribution \citet{Peters2013anm} require a strictly positive density. Their proof makes use of the intersection property of conditional independence (Definition~\ref{def:inters}) which is known to hold for positive densities \citep[e.g.][1.1.5]{Pearl2009}.

\subsection{Main Contributions}
In Section~\ref{sec:main} we provide a sufficient and necessary condition on the density for the intersection property to hold (Corollary~\ref{cor:1}). This result is of interest in itself since the developed condition is weaker than strict positivity.

As mentioned above, some causal discovery methods based on structural equation models require the intersection property for identification; they therefore rely on the strict positivity of the density. This can be achieved by fully supported noise variables, for example.
Using the new characterization of the intersection property we can now replace the condition of strict positivity. In fact, we show in Section~\ref{sec:appl} that noise variables with a path-connected support are sufficient for identifiability of the graph (Proposition~\ref{prop:pcnoise}).
This is already known for linear structural equation models \citep{Shimizu2006} but not for non-linear models. As an alternative, we provide a condition that excludes constant functions and leads to identifiability, too (Proposition~\ref{prop:nonconstfunct}).

In Section~\ref{sec:ex}, we provide an 
example of a structural equation model that violates the intersection property (but satisfies causal minimality).
Its corresponding graph is not identifiable from the joint distribution.
In correspondence to the theoretical results of this work, some noise densities in the example are do not have a path-connected support and the functions are partially constant. 
We are not aware of any causal discovery method that is able to infer the correct DAG or the correct Markov equivalence class; the example therefore shows current limits of causal inference techniques. It is non-generic in the case that it violates all sufficient assumptions mentioned in Section~\ref{sec:appl}.

\subsection{Conditional Independence and the Intersection Property} \label{sec:ciip}
We now formally introduce the concept of conditional independence in the presence of densities and the intersection property. 
Let therefore $A, B, C$ and $X$ be (possibly multi-dimensional) random variables that take values in metric spaces $\mathcal{A, B, C}$ and $\mathcal{X}$ respectively. We first introduce assumptions regarding the existence of a density and some of its properties that appear in different parts of this paper.
\begin{itemize}
\item[(A0)\ ] The distribution is absolutely continuous with respect to a product measure of a metric space. We denote the density by $p(\cdot)$. This can be a probability mass function or a probability density function, for example.
\item[(A1)\ ] The density $(a,b,c) \mapsto p(a,b,c)$ is continuous. 
\item[(A2)\ ] For each $c$ with $p(c)>0$ the set $\mathrm{supp}_c(A,B) := \{(a,b)\,:\, p(a,b,c)>0\}$ contains only one path-connected component (see Definition~\ref{def:uc}).
\item[(A2')] The density $p(\cdot)$ is strictly positive.
\end{itemize}
Condition (A2') implies (A2). We assume (A0) throughout the whole work.

In this paper we work with the following definition of conditional independence.
\begin{definition}[Conditional Independence] \label{def:ci}
We call $X$ independent of $A$ conditional on $B$ and write $X \independent A \given B$ if and only if
\begin{equation} \label{eq:defind}
p(x,a \given b) = p(x\given b)p(a \given b) 
\end{equation}
for all $x,a,b$ such that $p(b)>0$.
\end{definition}

The intersection property of conditional independence is defined as follows \citep[e.g.][1.1.5]{Pearl2009}.
\begin{definition}[Intersection Property] \label{def:inters}
We say that the joint distribution of $X,A,B,C$ satisfies the \emph{intersection property} if
\begin{equation} \label{eq:inters}
X \independent A \given B, C \; \text{ and } \; X \independent B \given A, C  \quad \implies \quad X \independent (A,B) \given C\ .
\end{equation}
\end{definition}
The intersection property~\eqref{eq:inters} has been proven to hold for strictly positive densities \citep[e.g.][1.1.5]{Pearl2009}. 
It is also known that the intersection property does not necessarily hold if the joint distribution does not have a density \citep[e.g.][]{Dawid79b}. 
\citet{Dawid80} provides measure-theoretic necessary and sufficient conditions for the intersection property.
In this work we assume the existence of a density (A0) and provide more detailed conditions under which the intersection property holds.

\section{Counter Example} \label{sec:ex}
We now give an example of a distribution that does not satisfy the intersection property~\eqref{eq:inters}. Since the joint distribution is absolutely continuous with respect to the Lebesgue measure, the example shows that the intersection property requires further restrictions on the density apart from its existence.
We will later use the same idea to prove Proposition~\ref{prop:counterex} that shows the necessity of our new condition.
\begin{example} \label{ex:cou}
Consider a structural equation model for random variables $X,A,B$: 
\begin{align*}
A &= N_A\,,\\
B &= A + N_B\,,\\
X &= f(B) + N_X\,,
\end{align*}
with $N_A \sim \mathcal{U}([-2;-1] \cup [1;2])$, $N_B, N_X \sim \mathcal{U}([-0.3;0.3])$
being jointly independent. Let the function $f$ be of the form
$$
f(b) = \left\{ 
\begin{array}{cl}
+10& \text{if } b>0.5\,,\\
0& \text{if } b<-0.5\,,\\
g(b)& \text{ else,}
\end{array} \right.
$$
where the function $g$ can be chosen to make $f$ arbitrarily smooth. Some parts of this structural equation model are summarized in Figure~\ref{fig:ex}. We clearly have $X \independent A \given B$ and $X \independent B \given A$ but $X \notindependent A$ and $X \notindependent B$. A formal proof of this statement is provided in the more general setting of Proposition~\ref{prop:counterex}.
It will turn out to be important that the two connected components of the support of $A$ and $B$ cannot be connected by an axis-parallel line. In the notation introduced in Definition~\ref{def:uc} below, this means $Z_1$ and $Z_2$ are not equivalent. 
Within each component, however, that is if we consider the areas $A,B>0$ and $A,B<0$ separately, we do have the independence statement $X \independent A,B$ that is predicted by the intersection property. This observation will be formalized as the weak intersection property in Proposition~\ref{prop:main}. 
\begin{figure}[ht]
\begin{center}
\includegraphics[width=0.55\textwidth]{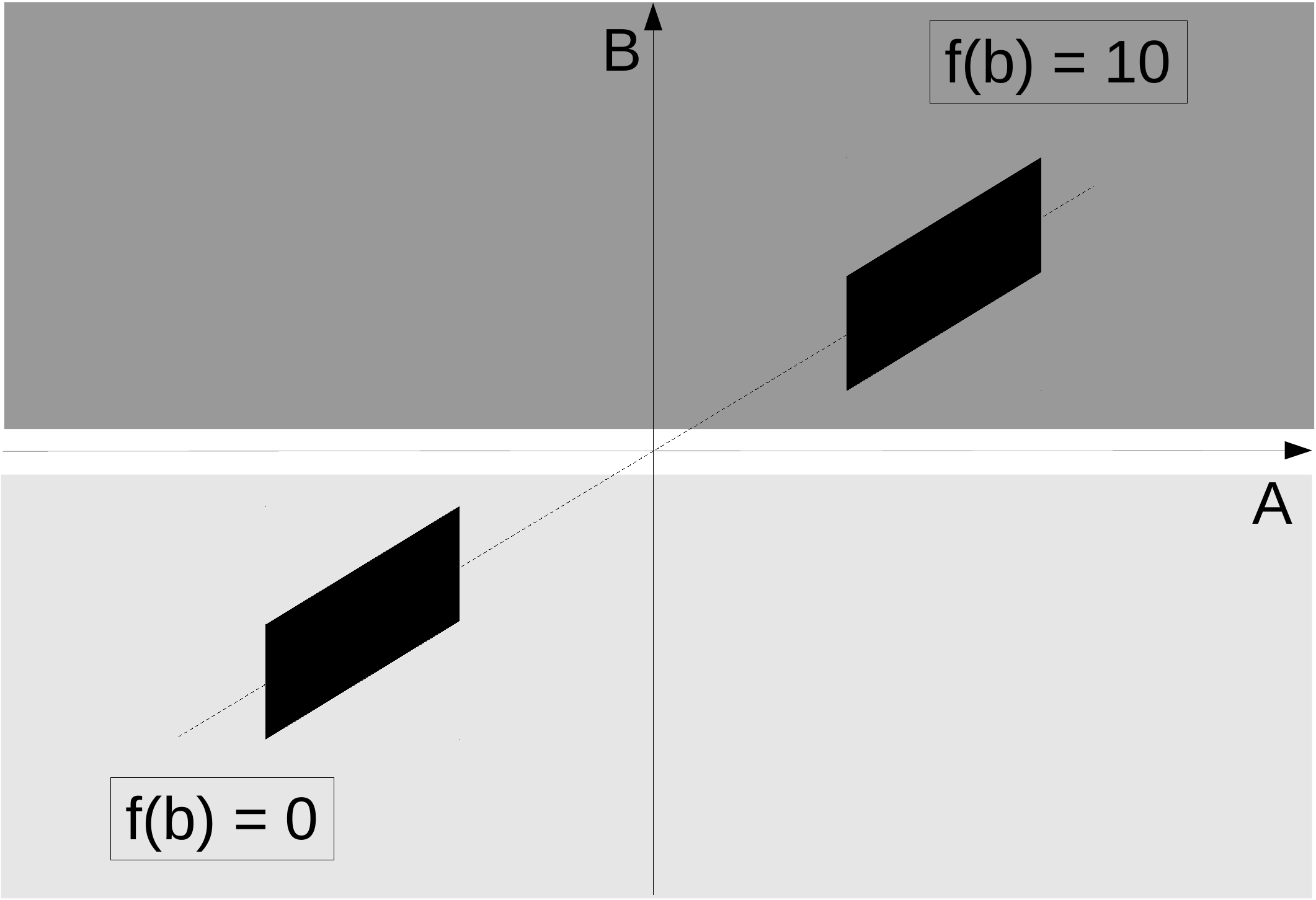}
\hspace{2cm}
\begin{tikzpicture}[scale=1, line width=0.5pt, minimum size=0.58cm, inner sep=0.3mm, shorten >=1pt, shorten <=1pt]
    \normalsize
    \draw (0,1.5) node(a2) [circle, draw] {$A$};
    \draw (2,1.5) node(b2) [circle, draw] {$B$};
    \draw (1,0.5) node(x2) [circle, draw] {$X$};
    \draw[-arcsq] (a2) -- (b2);
    \draw[-arcsq] (a2) -- (x2);
	\draw (1,0) node(xx) {alternative DAG};
    \draw (1,3) node(xx) {correct DAG};
    \draw (0,4.5) node(a) [circle, draw] {$A$};
    \draw (2,4.5) node(b) [circle, draw] {$B$};
    \draw (1,3.5) node(x) [circle, draw] {$X$};
    \draw[-arcsq] (a) -- (b);
    \draw[-arcsq] (b) -- (x);
   \end{tikzpicture}
\end{center}
\caption{Example~\ref{ex:cou}. The plot on the left hand side shows the support of variables $A$ and $B$ in black. In the areas filled with dark gray and light gray the function $f$ takes values ten and minus ten, respectively. The structural equation model corresponds to the top graph but the distribution can also be generated by a structural equation model with the bottom graph.}
\label{fig:ex}
\end{figure}
\end{example}
Example~\ref{ex:cou} has the following important implication for causal inference. The distribution satisfies causal minimality with respect to two different graphs, namely $A \rightarrow B \rightarrow X$ and $X \leftarrow A \rightarrow B$ (see Figure~\ref{fig:ex}). Since it violates faithfulness and the intersection property, we are not aware of any causal inference method that is able to recover the correct graph structure based on observational data only. 
Recall that \citet{Peters2013anm} assume strictly positive densities in order to assure the intersection property. More precisely, the example shows that Lemma~37 in \citep{Peters2013anm} does not hold anymore when the positivity is violated.

\section{Necessary and sufficient condition for the intersection property} \label{sec:main}
This section characterizes the intersection property in terms of the joint density over the corresponding random variables. In particular, we state a weak intersection property (Proposition~\ref{prop:main}) that leads to a necessary and sufficient condition for the classical intersection property, see Corollary~\ref{cor:1}. For these results, the notion of path-connectedness becomes important. A continuous mapping 
$\lambda: [0,1] \rightarrow \mathcal{X}$ into a metric space $\mathcal{X}$ is called a \emph{path} between $\lambda(0)$ and $\lambda(1)$ in $\mathcal{X}$. A subset $\mathcal{S} \subseteq \mathcal{X}$ is called path-connected if every pair of points from $\mathcal{S}$ can be connected by a path in $\mathcal{S}$.
We require the following definition.
\begin{definition} \label{def:uc}
\begin{itemize}
\item[(i)]
For each $c$ with $p(c)>0$ we consider the (not necessarily closed) support of $A$ and $B$:
$$
\mathrm{supp}_c(A,B) := \{(a,b)\ : \ p(a,b,c) > 0 \}\,.
$$ 
We further write for all sets $M \subset \mathcal{A} \times \mathcal{B}$
\begin{align*}
\mathrm{proj}_A(M) &:= \{a \in \mathcal{A}\ : \ \exists b \text{ with } (a,b) \in M \} \; \text{ and}\\
\mathrm{proj}_B(M) &:= \{b \in \mathcal{B}\ : \ \exists a \text{ with } (a,b) \in M \}\,.
\end{align*}
\item[(ii)] We denote the path-connected components of $\mathrm{supp}_c(A,B)$ by $(Z^c_i)_i$. 
Two path-connected components $Z^c_{i_1}$ and $Z^c_{i_2}$ are said to be coordinate-wise connected if 
\begin{align*}
\mathrm{proj}_A (Z^c_{i_1}) \cap \mathrm{proj}_A (Z^c_{i_2}) &\neq \emptyset \qquad \text{ or}\\
\mathrm{proj}_B (Z^c_{i_1}) \cap \mathrm{proj}_B (Z^c_{i_2}) &\neq \emptyset 
\end{align*}
We then say that $Z^c_i$ and $Z^c_j$ are equivalent if and only if there is a sequence 
$Z^c_{i} = Z^c_{i_1}, \ldots, Z^c_{i_m} = Z^c_{j}$ with two neighbours $Z^c_{i_k}$ and $Z^c_{i_{k+1}}$ being coordinate-wise connected.
We represent these equivalence classes by the union of all its members. These unions we denote by $(U^c_{i})_i$.

We further introduce a deterministic function $U^c$ of the variables $A$ and $B$.
We set 
$$
U^c := \left\{ 
\begin{array}{cl}
i & \text{ if } \;(A,B) \in U^c_i\\
0 & \text{ if } \;p(A,B) = 0 
\end{array}
\right..
$$
We have that $U^c = i$ if and only if $A \in \mathrm{proj}_A(U^c_{i})$ if and only if $B \in \mathrm{proj}_B(U^c_{i})$.

Note that the projections $\mathrm{proj}_A(U^c_{i})$ are disjoint (for different $i$); similarly for $\mathrm{proj}_B(U^c_{i})$.
\item[(iii)]
The case where there is no variable $C$ can be treated as if $C$ was deterministic: $p(c) = 1$ for some $c$.
\end{itemize}
\end{definition}
In Example~\ref{ex:cou} there is no variable $C$. Figure~\ref{fig:ex} shows the support $\mathrm{supp}_c(A,B)$ in black. It contains two path-connected components. Since they cannot be connected by axis-parallel lines, they are not equivalent; thus, one of them corresponds to $U_1^C$ and the other to $U_2^c$. Figure~\ref{fig:ex2} shows another example that contains three equivalence classes of path-connected components; again, there is no variable $C$; we  formally introduce a deterministic variable $C$ that always takes the value $c$.
\begin{figure}[ht]
\begin{center}
\includegraphics[width=0.8\textwidth]{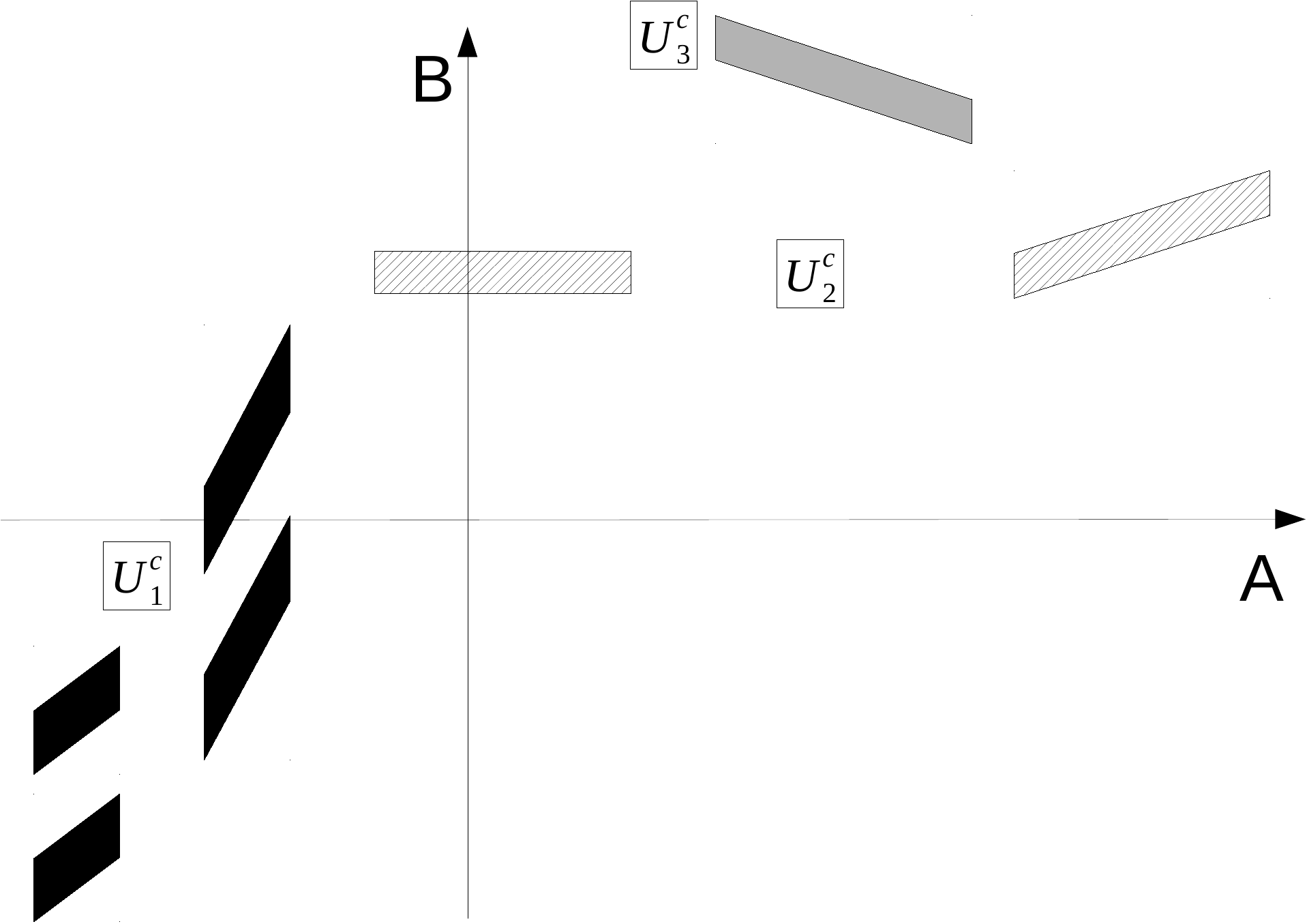}
\end{center}
\caption{Each block represents one path-connected component $Z_i^c$ of the support of $p(a,b)$. All blocks with the same filling are equivalent since they can be connected by axis-parallel lines. There are three different fillings corresponding to the equivalence classes $U_1^c$, $U_2^c$ and $U_3^c$.}
\label{fig:ex2}
\end{figure}

Using Definition~\ref{def:uc} we are now able to state the two main results, Propositions~\ref{prop:main} and~\ref{prop:counterex}. As a direct consequence we obtain Corollary~\ref{cor:1} which generalizes the condition of strictly positive densities.
\begin{proposition}[Weak Intersection Property] \label{prop:main}
Assume (A0), (A1) and that $X \independent A \given B, C$  and  $X \independent B \given A, C$. Consider now $c$ with $p(c)>0$ and the variable $U^c$ as defined in Definition~\ref{def:uc}(ii).
We then have the weak intersection property:
$$
X \independent (A,B) \given C = c, U^c\ .
$$
This means that
$$
p(x\given a,b,c,u^c) = p(x \given c, u^c)
$$
for all $x,a,b$ with $p(a,b,c) > 0$. 
\end{proposition}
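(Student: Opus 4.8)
The plan is to fix a value $c$ with $p(c)>0$ and a value $x$, and to show that the function $h(a,b):=p(x\given a,b,c)$ is constant on each of the sets $U^c_i$ of Definition~\ref{def:uc}; the asserted conditional independence then drops out by integrating $(A,B)$ out within each such set. The key first observation is that assumption (A1) makes $\mathrm{supp}_c(A,B)=\{(a,b):p(a,b,c)>0\}$ an \emph{open} subset of $\mathcal{A}\times\mathcal{B}$, and guarantees $p(a,c)>0$ and $p(b,c)>0$ for every $(a,b)$ in it (again by continuity together with positivity of the reference measure on open sets). Writing out Definition~\ref{def:ci} for the two hypotheses $X\independent A\given B,C$ and $X\independent B\given A,C$ and dividing by the positive factors $p(a\given b,c)$ and $p(b\given a,c)$, one obtains for every $(a,b)\in\mathrm{supp}_c(A,B)$
$$
p(x\given a,b,c)=p(x\given b,c)=p(x\given a,c),
$$
so $h(a,b)$ depends on $a$ alone and on $b$ alone at the same time.

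The second step is to show that $h$ is locally constant on $\mathrm{supp}_c(A,B)$. Since this set is open, each of its points has a basic neighbourhood of product form $U\times V\subseteq\mathrm{supp}_c(A,B)$. For any $(a,b),(a',b')\in U\times V$ the ``corner'' $(a,b')$ again lies in $U\times V$, hence in the support, so the displayed identities give $h(a,b)=h(a,b')=h(a',b')$ and $h$ is constant on $U\times V$. A locally constant function is constant on every connected --- in particular on every path-connected --- subset, so $h$ is constant on each path-connected component $Z^c_i$ of $\mathrm{supp}_c(A,B)$.

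Thirdly, I would extend this to the equivalence classes $U^c_i$. If $Z^c_{j_1}$ and $Z^c_{j_2}$ are coordinate-wise connected, say $a^{\ast}\in\mathrm{proj}_A(Z^c_{j_1})\cap\mathrm{proj}_A(Z^c_{j_2})$, choose $b_1,b_2$ with $(a^{\ast},b_1)\in Z^c_{j_1}$ and $(a^{\ast},b_2)\in Z^c_{j_2}$; then the constant value of $h$ on $Z^c_{j_1}$ equals $p(x\given a^{\ast},b_1,c)=p(x\given a^{\ast},c)=p(x\given a^{\ast},b_2,c)$, the constant value on $Z^c_{j_2}$, and symmetrically when the $\mathcal{B}$-projections overlap. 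Chaining this equality along the defining sequence of an equivalence class shows $h$ equals a single constant $q_i=q_i(x,c)$ on all of $U^c_i$. Since the $U^c_i$ partition $\mathrm{supp}_c(A,B)$ (on which $p(a,b)>0$), every $(a,b)$ with $p(a,b,c)>0$ satisfies $U^c=i$ for the unique $i$ with $(a,b)\in U^c_i$ and $p(x\given a,b,c)=q_i$. Integrating $(A,B)$ out then gives
$$
p(x,u^c=i\given c)=\int_{U^c_i}p(x\given a,b,c)\,p(a,b\given c)\,\mathrm{d}(a,b)=q_i\,p(u^c=i\given c),
$$
so $p(x\given c,u^c=i)=q_i$ whenever $p(u^c=i\given c)>0$, and hence $p(x\given a,b,c,u^c)=p(x\given a,b,c)=q_{U^c}=p(x\given c,u^c)$ for all $x$ and all $(a,b)$ with $p(a,b,c)>0$, which is the claim.

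The step I expect to be the main obstacle --- and the only one genuinely using (A1) --- is the local-constancy argument of the second step: converting the two ``one coordinate at a time'' identities into constancy on an entire path-connected component depends on the support being open, so that axis-parallel moves stay inside it, and one must check that product-form neighbourhoods really exist in the metric-space setting and that the clopen-partition argument for locally constant functions on connected sets (equivalently, a compactness/chaining argument along a path) goes through. A little further care is needed in the last step regarding the measure-theoretic meaning of conditioning on the discrete variable $U^c$ and the fact that $\{p(a,b,c)>0\}$ carries full $p(\cdot\given c)$-probability, so that the events $\{U^c=i\}$ behave as expected.
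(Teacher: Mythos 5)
Your proposal is correct and follows essentially the same route as the paper's proof: derive $p(x\given a,b,c)=p(x\given a,c)=p(x\given b,c)$ on the support, propagate this constancy along each path-connected component via axis-parallel ``corner'' moves made possible by (A1), chain across coordinate-wise connected components, and integrate out $(A,B)$ within each class $U^c_i$. The only differences are cosmetic: you establish constancy on a component via local constancy of $h$ on product neighbourhoods plus connectedness, where the paper covers a compact path by finitely many balls and chains through their intersections, and your integration over $(a,b)$ jointly slightly streamlines the paper's marginalization over $a$ alone (which forces it to handle the set $\mathcal{A}_b$ separately).
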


\begin{proposition}[Failure of Intersection Property] \label{prop:counterex}
Assume (A0), (A1) and that there are two different sets $U^{c^*}_{1} \neq U^{c^*}_2$ for some $c^*$ with $p(c^*)>0$. Then there is a random variable $X$ such that the intersection property~\eqref{eq:inters} does not hold for the joint distribution of $X,A,B,C$.
\end{proposition}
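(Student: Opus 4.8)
The plan is to construct $X$ explicitly, generalizing the idea of Example~\ref{ex:cou}, and then verify that the two pairwise conditional independences hold while the joint independence fails. First I would fix the value $c^*$ and work within the conditional distribution given $C=c^*$; since we have two distinct equivalence classes $U^{c^*}_1\neq U^{c^*}_2$, the projections $\mathrm{proj}_A(U^{c^*}_1), \mathrm{proj}_A(U^{c^*}_2)$ are disjoint (and likewise for $\mathrm{proj}_B$), by the remark following Definition~\ref{def:uc}(ii). This disjointness is the crucial structural fact: it lets us define a function $h$ on the support of $(A,B,C)$ that is constant in a way that only depends on which equivalence class $(A,B)$ lies in (for the relevant $c^*$), yet is a legitimate deterministic function of each coordinate separately on that fibre.

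The key steps, in order: (1) Define a measurable function $m$ that assigns distinct real values $v_1\neq v_2$ to the classes $U^{c^*}_1$ and $U^{c^*}_2$, and some fixed value (say $v_1$) everywhere else, so that $m$ depends on $(a,b,c)$ but, restricted to $c=c^*$, is determined by $a$ alone and also by $b$ alone (because the projections are disjoint within that fibre). (2) Set $X := m(A,B,C) + N$ where $N$ is, say, uniform on a tiny interval $[-\epsilon,\epsilon]$, independent of $(A,B,C)$, with $\epsilon$ small enough that the supports of $v_1+N$ and $v_2+N$ are disjoint, so $X$ reveals $m(A,B,C)$ exactly. (3) Check $X\independent A\given B,C$: conditionally on $B=b$ and $C=c$, the value $m(A,B,C)$ is already a deterministic function of $(b,c)$ — when $c=c^*$ because $m$ restricted to that fibre is a function of $b$, and when $c\neq c^*$ because $m$ is constant there — hence $X=m(b,c)+N$ does not depend on $A$; symmetrically for $X\independent B\given A,C$. (4) Check $X\notindependent (A,B)\given C$: take $C=c^*$; then there exist $(a,b)\in U^{c^*}_1$ and $(a',b')\in U^{c^*}_2$ with positive density, and $X$ takes values near $v_1$ in the first case and near $v_2$ in the second, so $p(x\given a,b,c^*)\neq p(x\given a',b',c^*)$, contradicting $X\independent(A,B)\given C=c^*$.

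The main obstacle I anticipate is step (1): making $m$ genuinely a function of $a$ alone and of $b$ alone on the fibre $c=c^*$, while also being a well-defined function on all of $\mathrm{supp}(A,B,C)$ and compatible with assumption (A1) (continuity of the density — though note (A1) is about the density, not about $X$, so $X$'s construction need not be continuous). The disjointness of the projections handles the ``function of $a$ alone / of $b$ alone'' requirement cleanly on the fibre, but one must be careful at values of $a$ that belong to $\mathrm{proj}_A(U^{c^*}_i)$ for some $i$ yet also appear with other $c$'s where no such class structure is imposed; choosing $m$ to default to $v_1$ off the two distinguished classes resolves this, and one should double-check that this default does not accidentally merge the two classes (it does not, since the points witnessing the dependence in step (4) still have $m=v_1$ versus $m=v_2$). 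A secondary technical point is ensuring $X$ as constructed admits a density with respect to a product measure (so that (A0) is preserved); convolving the discrete-valued $m(A,B,C)$ with the continuous noise $N$ takes care of this. Finally one should confirm that the joint distribution we build really has $p(c^*)>0$ with the two classes carrying positive mass, which is exactly the hypothesis, so no extra work is needed there.
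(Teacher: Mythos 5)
Your proposal is correct and follows essentially the same route as the paper: the paper also sets $X = g(C,U^C) + N_X$ with $N_X$ uniform on a small interval, where $g$ takes two distinct values depending on whether $(A,B)$ lies in $U^{c^*}_1$ (with $C=c^*$) or not, uses the disjointness of the projections to argue that $g(C,U^C)$ is a function of $(A,C)$ alone and of $(B,C)$ alone (giving both pairwise conditional independences), and then observes that conditioning on $C=c^*$ alone mixes the two values, so $X \notindependent B \given C=c^*$. The only cosmetic difference is that the paper concludes via the derived statement $X\notindependent B\given C$ rather than directly via $X\notindependent (A,B)\given C$; both suffice.
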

As a direct corollary from these two propositions we obtain a characterization of the intersection property in the case of continuous densities.
\begin{corollary}[Intersection Property] \label{cor:1}
Assume (A0) and (A1). Then 
\begin{align*}
&\text{The intersection property~\eqref{eq:inters} holds for all variables } X. \\
\Longleftrightarrow \quad &\text{All components } Z^c_i \text{ are equivalent, i.e. there is only one set } U^c_1.
\end{align*}
In particular, this is the case if (A2) holds (there is only one path-connected component) or (A2') holds (the density is strictly positive).
\end{corollary}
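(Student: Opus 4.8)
The plan is to obtain the corollary as a routine combination of Propositions~\ref{prop:main} and~\ref{prop:counterex}: the two implications are handled separately, and neither requires a new idea beyond some bookkeeping for the deterministic variable $U^c$.

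For the direction ``a single equivalence class $\Rightarrow$ the intersection property holds for all $X$'', I would take the antecedents $X \independent A \given B,C$ and $X \independent B \given A,C$, fix an arbitrary $c$ with $p(c)>0$, and apply Proposition~\ref{prop:main} to get $X \independent (A,B) \given C=c, U^c$. By hypothesis $\mathrm{supp}_c(A,B)$ consists of a single equivalence class of path-connected components, so the deterministic function $U^c$ of Definition~\ref{def:uc}(ii) takes the constant value $1$ on $\{(a,b): p(a,b,c)>0\}$, which is exactly the conditional support of $(A,B)$ given $C=c$; hence, given $C=c$, one has $U^c=1$ almost surely. Conditioning additionally on $U^c$ is therefore vacuous, so the weak intersection statement $p(x\given a,b,c,u^c)=p(x\given c,u^c)$ for all $x,a,b$ with $p(a,b,c)>0$ reduces to $p(x\given a,b,c)=p(x\given c)$, i.e. $X\independent (A,B)\given C=c$. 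Letting $c$ range over all values with $p(c)>0$ yields $X\independent (A,B)\given C$, which establishes the implication~\eqref{eq:inters}.

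For the converse I would argue by contraposition. If it is not true that all $Z^c_i$ are equivalent, then for some $c^*$ with $p(c^*)>0$ there are two distinct sets $U^{c^*}_1 \neq U^{c^*}_2$, and Proposition~\ref{prop:counterex} supplies a random variable $X$ for which~\eqref{eq:inters} fails; hence the intersection property does not hold for all $X$. This gives both directions of the equivalence. Finally, for the ``in particular'' clause I would simply note that (A2) states that $\mathrm{supp}_c(A,B)$ has exactly one path-connected component for every $c$ with $p(c)>0$, so there is a single $Z^c_1$ and a fortiori a single equivalence class $U^c_1$; and (A2') implies (A2), as already observed in Section~\ref{sec:ciip}. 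Thus both (A2) and (A2') fall under the equivalent condition.

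The only mildly delicate point, and the one I would spell out carefully in the full write-up, is the verification that augmenting the conditioning set by $U^c$ changes nothing when there is a single class: one must check that given $C=c$ the event $\{U^c\neq 1\}$ has conditional probability zero (because, conditionally on $C=c$, the pair $(A,B)$ almost surely lies in $\mathrm{supp}_c(A,B)=U^c_1$), so that $p(x\given c,u^c)=p(x\given c)$ and the quantifier ``for all $a,b$ with $p(a,b,c)>0$'' already covers the full conditional support. Everything else is a direct citation of the two propositions.
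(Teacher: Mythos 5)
Your proposal is correct and takes essentially the same route as the paper, which presents the corollary as a direct consequence of Propositions~\ref{prop:main} and~\ref{prop:counterex} without further argument; your added verification that conditioning on $U^c$ is vacuous when there is a single equivalence class (so that $p(x\given c,u^c)=p(x\given c)$ and the weak intersection property collapses to the ordinary one) is exactly the bookkeeping the paper leaves implicit.
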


\section{Application to Causal Discovery} \label{sec:appl}
We now define what we mean by identifiability of the graph in continuous additive noise models.
Assume that a joint distribution over $X_1, \ldots, X_p$ is generated by a structural equation model (SEM)
\begin{equation} \label{eq:sem}
X_i = f_i(X_{\PA[]{i}}) + N_i\,,
\end{equation}
with continuous, non-constant functions $f_i$, additive and jointly independent noise variables $N_i$ with mean zero and sets $\PA[]{i}$ that are the parents of $i$ in a directed acyclic graph $\mathcal{G}$. To simplify notation, we identify variables $X_i$ with its index (or node) $i$. We consider the following statement
$$
(*) \quad \begin{array}{l}
\mathcal{G} \text{ is identifiable from the joint distribution, i.e. it cannot}\\
\text{ be generated by an SEM with different graph } \mathcal{H} \neq \mathcal{G}\,.
\end{array}
$$
\citet[][Theorem~27]{Peters2013anm} prove this identifiability by extending the identifiability from graphs with two nodes to graphs with an arbitrary number of variables.
Because they require the intersection property, it is shown only for strictly positive densities. But since Corollary~\ref{cor:1} provides weaker assumption for the intersection property, we can use it to obtain new identifiability results.
\begin{proposition} \label{prop:pcnoise}
Assume that a joint distribution over $X_1, \ldots, X_p$ is generated by a structural equation model~\eqref{eq:sem}.
If all densities of $N_1, \ldots, N_p$ are path-connected, then the density of $X_1, \ldots, X_p$ is path-connected, too. Thus, the intersection property~\eqref{eq:inters} holds for any disjoint sets of variables $X,A,B,C \in \{X_1, \ldots, X_p\}$ (see Corollary~\ref{cor:1}). Therefore, statement $(*)$ holds if the noise variables have continuous densities and path-connected support.
\end{proposition}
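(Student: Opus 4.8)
The plan is to prove the three assertions of the proposition in order: first that the support of the joint density $p(x_1,\ldots,x_p)$ is path-connected, then that this together with Corollary~\ref{cor:1} yields the intersection property for any disjoint $X,A,B,C$, and finally statement~$(*)$, which then follows by substituting the weaker hypothesis for strict positivity wherever \citet{Peters2013anm} invoke the intersection property (their Lemma~37).

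For the first step I would fix a topological order of $\mathcal{G}$ and consider the map $\phi\colon\R^p\to\R^p$ that sends a noise vector $(n_1,\ldots,n_p)$ to the solution $(x_1,\ldots,x_p)$ of the system~\eqref{eq:sem}, obtained by setting $x_i=f_i(x_{\PA[]{i}})+n_i$ one variable at a time in topological order. Since the $f_i$ are continuous, $\phi$ is continuous, and it is in fact a homeomorphism of $\R^p$ with inverse $(x_1,\ldots,x_p)\mapsto\big(x_i-f_i(x_{\PA[]{i}})\big)_i$. Resolving the variables in topological order also shows that the joint density factorizes as $p(x_1,\ldots,x_p)=\prod_{i=1}^p p_{N_i}\!\big(x_i-f_i(x_{\PA[]{i}})\big)$, so it is continuous and its support equals $\phi\big(S_1\times\cdots\times S_p\big)$, where $S_i:=\{n:p_{N_i}(n)>0\}$ is the support of $N_i$, path-connected by hypothesis. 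A finite product of path-connected sets is path-connected and the continuous image of a path-connected set is path-connected; hence $\phi\big(S_1\times\cdots\times S_p\big)$ is path-connected, which is the first claim (it also yields assumptions (A0) and (A1)).

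For the second step, fix disjoint $X,A,B,C\subseteq\{X_1,\ldots,X_p\}$ and a value $c$ with $p(c)>0$, and try to verify the hypothesis of Corollary~\ref{cor:1}, namely that $\mathrm{supp}_c(A,B)$ is path-connected (assumption (A2)). If $C=\emptyset$ this is immediate, since $\mathrm{supp}(A,B)$ is a coordinate projection of the path-connected set $\phi\big(S_1\times\cdots\times S_p\big)$. If $C\neq\emptyset$, the idea is that conditioning on $\{C=c\}$ again produces an additive-noise model with path-connected noise: every variable in $C$ is a deterministic function of the noise terms indexed by the ancestors of $C$, so when $C$ is an ancestral set --- which is the case for the conditioning sets that arise in the identifiability proof --- these are exactly $\{N_i:i\in C\}$, which are independent of $\{N_i:i\notin C\}$; hence after substituting $C=c$ the remaining variables $\{X_i:i\notin C\}$ satisfy an additive-noise model with continuous structural functions and jointly independent, path-connected noise. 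Applying the first step to this conditional model, its support --- and hence the coordinate projection $\mathrm{supp}_c(A,B)$ --- is path-connected, so Corollary~\ref{cor:1} gives $X \independent (A,B) \given C$. Feeding this into \citet{Peters2013anm} in place of their positivity-based intersection property then yields statement~$(*)$.

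I expect the real obstacle to be exactly the passage from path-connectedness of the joint support to path-connectedness of the conditional support $\mathrm{supp}_c(A,B)$: a slice of a path-connected set need not be connected, and indeed one can construct an SEM with path-connected noise whose $\mathrm{supp}_c(A,B)$ splits into several components that are not coordinate-wise connected in the sense of Definition~\ref{def:uc}, so that (A2) itself fails. The argument must therefore exploit the additive-noise structure of the conditional law rather than path-connectedness alone; this goes through cleanly when the conditioning set is ancestral, and the remaining work is either to verify that only such conditioning sets occur inside the identifiability proof of \citet{Peters2013anm}, or to argue directly --- using Proposition~\ref{prop:main} --- that even when $\mathrm{supp}_c(A,B)$ has several equivalence classes $U^c_i$, the additive form $X_i=f_i(X_{\PA[]{i}})+N_i$ forces the conditional distribution of $X$ given $C=c$ to agree across them, so that $X \independent (A,B) \given C$ persists.
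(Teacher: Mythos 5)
Your first step is correct and is essentially the paper's argument in a more compact form: the paper fixes a causal ordering and shows inductively that if $X$ has path-connected support, $f$ is continuous and $N$ is an independent one-dimensional noise with path-connected support, then $(X,f(X)+N)$ has path-connected support, by concatenating three explicit sub-paths (down to the graph of $f$, along the graph of $f$, back up to the target point). Your observation that the support equals the image of $S_1\times\cdots\times S_p$ under the triangular homeomorphism $\phi$ packages that induction into one line; the two arguments are interchangeable.

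On the second step you have gone further than the paper does. The paper's proof stops right after the support argument and simply asserts that Lemma~37 and Proposition~28 of \citet{Peters2013anm} remain correct; it never discusses the passage from path-connectedness of the full joint support to condition (A2) for a particular triple $(A,B,C)$, i.e.\ to the structure of the sliced support $\mathrm{supp}_c(A,B)$. The worry you raise is genuine: take $X_1=N_1\sim\mathcal{U}[-1,1]$, $X_2=X_1+N_2$, $X_3=X_1^2+N_3$ with small uniform noises; conditioning on $X_3=c$ for $c$ near $0.49$ splits $\mathrm{supp}_c(X_1,X_2)$ into two components near $(0.7,0.7)$ and $(-0.7,-0.7)$ whose coordinate projections are disjoint, so they are not equivalent in the sense of Definition~\ref{def:uc} even though every noise support is an interval. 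Hence Corollary~\ref{cor:1} cannot be invoked verbatim for arbitrary disjoint $X,A,B,C$, and the middle claim of the proposition needs either a restriction to the conditioning sets that actually occur in the identifiability proof (where your ancestral-set reduction, or a direct appeal to the weak intersection property of Proposition~\ref{prop:main} together with the additive structure, would close it) or a separate argument showing that a violation cannot be realized by a variable $X_i$ of the SEM itself (Proposition~\ref{prop:counterex} only produces an external $X$). You correctly located this difficulty and sketched two plausible repairs, but left both unfinished; be aware that the paper's own proof does not address the point at all, so the verification you defer is genuinely missing rather than something you failed to reproduce.
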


Example~\ref{ex:cou} violates the assumption of Proposition~\ref{prop:pcnoise} since the support of $A$ is not path-connected. It satisfies another important property, too: the function $f$ is constant on some intervals. The following proposition shows that this is necessary to violate identifiability.
\begin{proposition} \label{prop:nonconstfunct}
Assume that a joint distribution over $X_1, \ldots, X_p$ is generated by a structural equation model~\eqref{eq:sem} with graph $\mathcal{G}$. Let us denote the non-descendants of $X_i$ by $\ND[\mathcal{G}]{i}$.
Assume that the structural equations are non-constant in the following way: for all $X_i$, for all its parents $X_j \in \PA[]{i}$ and for all $X_{\mathbf{C}} \subseteq \ND[\mathcal{G}]{i} \setminus \{X_j\}$, there are $(x_j,x_j',x_k,x_c)$ such that 
$
f_i(x_j,x_k) \neq f_i(x_j',x_k)
$
and $p(x_j,x_k,x_c)>0$ and $p(x_j',x_k,x_c)>0$. Here, $x_k$ represents the value of all parents of $X_i$ except $X_j$. Then for any $\PA[]{i} \setminus \{j\} \subseteq \mathbf{S} \subseteq \ND[\mathcal{G}]{i} \setminus \{j\}$, it holds that
$
X_i \notindependent X_j \, \given \, \mathbf{S}
$.
Therefore, statement $(*)$ follows.
\end{proposition}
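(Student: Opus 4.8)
The plan is to prove the contrapositive-flavored statement directly: fix $X_i$, a parent $X_j \in \PA[]{i}$, and a set $\mathbf{S}$ with $\PA[]{i}\setminus\{j\} \subseteq \mathbf{S} \subseteq \ND[\mathcal{G}]{i}\setminus\{j\}$, and show $X_i \notindependent X_j \given \mathbf{S}$ under the stated non-constancy hypothesis. First I would split $\mathbf{S}$ as $\mathbf{S} = (\PA[]{i}\setminus\{j\}) \,\cup\, \mathbf{C}$ where $\mathbf{C} \subseteq \ND[\mathcal{G}]{i}\setminus\PA[]{i}$; write $X_k$ for the parents of $X_i$ other than $X_j$ and $X_{\mathbf{C}}$ for the remaining conditioning variables, so that conditioning on $\mathbf{S}$ is the same as conditioning on $(X_k, X_{\mathbf{C}})$. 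The key structural fact I would invoke is that, since $\mathbf{S} \cup \{X_j\} \subseteq \ND[\mathcal{G}]{i}$, the noise variable $N_i$ is independent of $(X_j, X_k, X_{\mathbf{C}})$: all of these variables are functions of noise terms $N_\ell$ with $\ell \neq i$ (they are non-descendants of $i$), and the $N_\ell$ are jointly independent by the SEM assumptions. Hence, conditionally on $(X_j = x_j, X_k = x_k, X_{\mathbf{C}} = x_c)$ with positive density, $X_i$ is distributed as $f_i(x_j, x_k) + N_i$, i.e.\ a fixed shift of the law of $N_i$.

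The second step is to use the non-constancy hypothesis to produce a genuine dependence. By assumption there exist $(x_j, x_j', x_k, x_c)$ with $f_i(x_j, x_k) \neq f_i(x_j', x_k)$ and both $p(x_j, x_k, x_c) > 0$ and $p(x_j', x_k, x_c) > 0$. By the previous paragraph, the conditional law of $X_i$ given $(x_j, x_k, x_c)$ is the law of $N_i$ shifted by $f_i(x_j,x_k)$, and given $(x_j', x_k, x_c)$ it is the law of $N_i$ shifted by $f_i(x_j', x_k)$. Since these two shifts differ and $N_i$ is not (almost surely) constant — indeed $N_i$ has a density, so it is non-degenerate, and a non-degenerate distribution is never equal to a nontrivial translate of itself — the two conditional distributions of $X_i$ are distinct. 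Therefore $p(x_i \given x_j, x_k, x_c) \neq p(x_i \given x_j', x_k, x_c)$ for some $x_i$, which means $X_i$ depends on $X_j$ even after conditioning on $(X_k, X_{\mathbf{C}}) = \mathbf{S}$; this is exactly $X_i \notindependent X_j \given \mathbf{S}$.

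Finally I would derive statement $(*)$ from this non-dependence. Here I would quote the machinery of \citet[Theorem~27]{Peters2013anm}: their proof of identifiability proceeds by showing that no alternative SEM with a different DAG $\mathcal{H} \neq \mathcal{G}$ can generate the same distribution, and the only place strict positivity of the density enters is to guarantee the intersection property (used in their Lemma~37) and to guarantee the "restricted ANM" condition that each edge carries genuine dependence, i.e.\ $X_i \notindependent X_j \given \mathbf{S}$ for the relevant separating sets $\mathbf{S}$. We have just established the latter directly from the non-constancy hypothesis, and the intersection property itself is not needed for this route — or, if one does need it for a sub-step, one can instead appeal to Corollary~\ref{cor:1} in the path-connected case of Proposition~\ref{prop:pcnoise}; for the present proposition the non-dependence statement is what replaces their positivity assumption, so $(*)$ follows verbatim from their argument with "strictly positive density" replaced by "non-constant structural equations in the sense above."

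The main obstacle I anticipate is the last step: making precise exactly which consequences of strict positivity are invoked inside the proof of \citet[Theorem~27]{Peters2013anm} and checking that each is implied either by the non-dependence conclusion $X_i \notindependent X_j \given \mathbf{S}$ (for all the sandwiched sets $\mathbf{S}$, which is why the hypothesis must quantify over all $X_{\mathbf{C}} \subseteq \ND[\mathcal{G}]{i}\setminus\{X_j\}$) or is simply not needed. The independence-of-noise argument and the translate-of-a-density argument in steps one and two are routine; the bookkeeping of reducing the identifiability theorem's hypotheses to the weaker condition is where care is required.
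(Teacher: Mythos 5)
Your proposal is correct and follows essentially the same route as the paper: condition on $(x_j,x_k,x_c)$, use independence of $N_i$ from the non-descendants to see that the conditional law of $X_i$ is a shift of the law of $N_i$ by $f_i(\cdot,x_k)$, conclude the two conditional distributions differ, and then invoke Proposition~28 of \citet{Peters2013anm} for $(*)$. The only cosmetic difference is that the paper distinguishes the two conditional laws by their means (using $\mean N_i=0$), whereas you argue that no distribution equals a nontrivial translate of itself; both are fine.
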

Proposition~\ref{prop:nonconstfunct} provides an alternative  way to prove identifiability. The results are summarized in Table~\ref{tab:identifresults}.

\begin{table}
\begin{center}
\begin{tabular}{c|c}
\multirow{1}{*}{additional assumption on continuous ANMs} & identifiability of graph, see $(*)$ \\\hline \hline
\multirow{2}{*}{noise variables with full support}& \cmark \\  
 & \citep{Peters2013anm}\\ \hline 
\multirow{2}{*}{noise variables with path-connected support}& \cmark  \\
 &  Proposition~\ref{prop:pcnoise}\\ \hline
\multirow{2}{*}{non-constant functions, see Proposition~\ref{prop:nonconstfunct}}& \cmark  \\
 & Proposition~\ref{prop:nonconstfunct}\\ \hline
\multirow{2}{*}{none of the above satisfied}& \xmark \\
 & Example~\ref{ex:cou}
\end{tabular}
\caption{This table shows conditions for continuous additive noise models (ANMs) that lead to identifiability of the directed acyclic graph from the joint distributions. Using the characterization of the intersection property we could weaken the condition of a strictly positive density.}
\label{tab:identifresults}
\end{center}
\end{table}

\section{Conclusion}
It is possible to prove the intersection property of conditional independence for variables whose distributions do not have a strictly positive density. A necessary and sufficient condition for the intersection property is that all path-connected components of the support of the density are equivalent, that is they can be connected by axis-parallel lines. In particular, this condition is satisfied for densities whose support is path-connected. In the general case, the intersection property still holds conditioning on any equivalence class of path-connected components, we call this the weak intersection property.

This insight has a direct application in causal inference. For continuous additive noise models we can prove identifiability of the graph from the joint distribution using strictly weaker assumptions than before.

\section{Proofs}

\subsection{Proof of Proposition~\ref{prop:main}}
We require the following well-known lemma \citep[e.g.][]{Dawid79}. 
\begin{lemma} \label{lem:1}
We have $X \independent A \given B$ if and only if
$$
p(x \given a,b) = p(x \given b) 
$$
for all $x,a,b$ such that $p(a,b) > 0$ and $p(b)>0$.
\end{lemma}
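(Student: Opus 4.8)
The plan is to prove Lemma~\ref{lem:1} directly from Definition~\ref{def:ci} by translating the factorization condition~\eqref{eq:defind} into the conditional-mean form, taking care of the subtlety that \eqref{eq:defind} is required to hold for \emph{all} $x,a,b$ with $p(b)>0$, including those with $p(a,b)=0$, whereas the claimed equivalent statement only constrains $x,a,b$ with $p(a,b)>0$.

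For the ``only if'' direction, I would assume $X \independent A \given B$ and fix $x,a,b$ with $p(a,b)>0$ and $p(b)>0$. Since $p(a,b)>0$ we may divide by $p(a\given b)=p(a,b)/p(b)>0$ in~\eqref{eq:defind}; dividing both sides of $p(x,a\given b) = p(x\given b)p(a\given b)$ by $p(a\given b)$ and using $p(x\given a,b) = p(x,a\given b)/p(a\given b)$ yields $p(x\given a,b) = p(x\given b)$, as desired. This is the easy direction and is essentially the second displayed computation in the commented-out block.

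For the ``if'' direction, I would assume $p(x\given a,b) = p(x\given b)$ holds for all $x,a,b$ with $p(a,b)>0$ and $p(b)>0$, and verify~\eqref{eq:defind} for an arbitrary $x,a,b$ with $p(b)>0$. Split into two cases. If $p(a,b)>0$, then $p(x,a\given b) = p(x\given a,b)\,p(a\given b) = p(x\given b)\,p(a\given b)$ by the hypothesis. If $p(a,b)=0$, then $p(x,a\given b) = p(x,a,b)/p(b) = 0$ since $p(x,a,b)\le p(a,b)=0$; also $p(a\given b) = p(a,b)/p(b) = 0$, so the right-hand side $p(x\given b)p(a\given b)$ is likewise zero, and~\eqref{eq:defind} holds trivially.

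I do not expect a genuine obstacle here — the lemma is standard and the only thing to be careful about is the case $p(a,b)=0$, where one must check that both sides of the defining equation vanish rather than appealing to a division that is not licensed. A secondary point worth a sentence is that quantities like $p(x\given a,b)$ are only well defined when $p(a,b)>0$, which is exactly why the lemma restricts the quantifier in that way; no measure-zero modification issues arise because $p$ is a fixed (here continuous) density and all statements are pointwise. Routine manipulation of the conditional-density identities $p(x,a\given b) = p(x,a,b)/p(b)$ and $p(x\given a,b) = p(x,a,b)/p(a,b)$ completes the argument.
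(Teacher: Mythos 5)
Your proof is correct and follows essentially the same route as the paper's own (commented-out) proof of Lemma~\ref{lem:1}: for ``only if'' divide~\eqref{eq:defind} by $p(a\given b)>0$, and for ``if'' verify~\eqref{eq:defind} separately on the cases $p(a,b)>0$ and $p(a,b)=0$. One small repair to your justification in the second case: the pointwise inequality $p(x,a,b)\le p(a,b)$ holds for probability mass functions but not for continuous densities; the right argument that $p(x,a,b)=0$ whenever $p(a,b)=0$ is that the nonnegative function $x\mapsto p(x,a,b)$ integrates to $p(a,b)=0$, hence vanishes for almost every $x$, and for every $x$ under the continuity assumption (A1).
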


\begin{proof}(of Proposition~\ref{prop:main})
We have by Lemma~\ref{lem:1}
\begin{equation} \label{eq:help}
p(x \given b,c) = p(x\given a,b,c) = p(x \given a,c)
\end{equation}
for all $x,a,b,c$ with $p(a,b,c)>0$. 
As the main argument we show that
\begin{equation} \label{eq:aaa}
p(x \given b,c) = p(x \given \tilde b,c)
\end{equation}
for all $x,b,\tilde b, c$ with $b, \tilde b \in \mathrm{proj}_B(U^c_i)$ for the same $i$. \\
{\bf Step 1}, we prove equation~\eqref{eq:aaa} for $b, \tilde b \in Z^c_i$, that is there is a path $(a(t),b(t))$, such that $p(a(t),b(t),c)>0$ for all $0 \leq t \leq 1$, and $b(0) = b$ and $b(1) = \tilde b$. Since the interval $[0,1]$ is compact and $p$ is continuous, the path $\{(a(t),b(t))\ : \ 0 \leq t \leq 1\}$ is compact, too. Define for each point $(a(t),b(t))$ on the path an open ball with radius small enough such that all $(a,b)$ in the ball satisfy $p(a,b,c)>0$. Since this is an open cover of the space, choose a finite subset, of size $n$ say, of all those balls that still provide an open cover of the path. Without loss of generality generality let $(a(0),b(0))$ be the center of ball $1$ and $(a(1),b(1))$ be the center of ball $n$. It suffices to show that equation~\eqref{eq:aaa} holds for the centres of two neighbouring balls, say $(a_1,b_1)$ and $(a_2, b_2)$. Choose one point $(a^*,b^*)$ from the non-empty intersection of those two balls. Since $d((a_1,b_1), (a^*,b_1)) < d((a_1,b_1), (a^*,b^*))$ and 
$d((a_2,b_2), (a_2,b^*)) < d((a_2,b_2), (a^*,b^*))$
for the Euclidean metric $d$, we have that $p(a_1,b_1,c)$, $p(a^*,b_1,c)$, $p(a^*,b^*,c)$, $p(a_2,b^*,c)$ and $p(a_2,b_2,c)$ are all greater zero. Therefore, using equation~\eqref{eq:help} several times, 
\begin{align*}
p(x \given b_1,c) &= p(x \given a_1,c) = p(x \given a^*,c)\\
&= p(x \given b^*,c) = p(x \given a_2,c) =p(x \given b_2,c)
\end{align*}
This shows equation~\eqref{eq:aaa} for $b, \tilde b \in Z^c_i$.\\
{\bf Step 2}, we prove equation~\eqref{eq:aaa} for $b \in Z^c_i$ and $\tilde b \in Z^c_{i+1}$, where $Z^c_i$ and $Z^c_{i+1}$ are coordinate-wise connected (and thus equivalent). If 
$b^* \in \mathrm{proj}_B(Z^c_i) \cap \mathrm{proj}_B(Z^c_{i+1})$, we know that 
$$
p(x \given b,c) = p(x \given b^*,c) = p(x \given \tilde b,c)
$$
from the argument given in step~1 above. If $a^* \in \mathrm{proj}_A(Z^c_i) \cap \mathrm{proj}_A(Z^c_{i+1})$, then there is a $b_i, b_{i+1}$ such that $(a^*,b_i) \in Z^c_i$ and $(a^*,b_{i+1}) \in Z^c_{i+1}$. By equation~\eqref{eq:help} and the argument from step~1 we have
$$
p(x \given b,c) = p(x \given b_i,c) = p(x \given b_{i+1},c) = p(x \given \tilde b,c)
$$
We can now combine these two steps in order to prove the original claim from equation~\eqref{eq:aaa}. If $b, \tilde b \in \mathrm{proj}_B(U^c_i)$ then $b \in \mathrm{proj}_B(Z^c_1)$ and $\tilde b \in \mathrm{proj}_B(Z^c_n)$, say. Further, there is a sequence $Z^c_1, \ldots, Z^c_n$ coordinate-connecting these components. Combining steps 1 and 2 proves equation~\eqref{eq:aaa}.\\

Consider now $x,b,c$ such that $p(b,c)>0$ (which implies $p(c)>0$) and consider $u^c = i$, say. 
Observe further that $p(a,c)>0$ for $a \in \mathrm{proj}_A(U^c_i)$. 
We thus have
\begin{align*}
p(x,u^c \given c) &= \int_{a} p(x,a,u^c \given c) \ da 
= \int_{a \in \mathrm{proj}_A(U^c_i)} p(x,a \given c) \ da\\
&= \int_{a \in \mathrm{proj}_A(U^c_i)} \frac{p(x,a,c) p(a ,c)}{p(c)p(a,c)} \ da \\
&= \int_{a \in \mathrm{proj}_A(U^c_i)} p(x \given a,c) p(a \given c) \ da \\
&= \int_{a \in \mathrm{proj}_A(U^c_i), p(a,b,c)>0} p(x \given a,c) p(a \given c) \ da \\
& \qquad \qquad \qquad \qquad \qquad + \int_{a \in \mathrm{proj}_A(U^c_i),p(a,b,c)=0} p(x \given a,c) p(a \given c) \ da \\
&= p(x \given b,c) \int_{a \in \mathrm{proj}_A(U^c_i), p(a,b,c)>0} p(a \given c) \ da  + \int_{\mathcal{A}_b} p(x \given a,c) p(a \given c) \ da\\
&=: (\#) 
\end{align*}
with $\mathcal{A}_b = \{a \in \mathrm{proj}_A(U^c_i)\ : \ p(a,b,c)=0\}$. It is the case, however, that for all $a \in \mathcal{A}_b$ there is a $\tilde b(a) \in \mathrm{proj}_B(U^c_i)$ with $p(a,\tilde b(a),c)>0$.
But since also $b \in \mathrm{proj}_B(U^c_i)$ we have $p(x\given \tilde b,c) = p(x\given b,c)$ by equation~\eqref{eq:aaa}. Ergo,
\begin{align*}
(\#) &= p(x \given b,c) \int_{a \in \mathrm{proj}_A(U^c_i), p(a,b,c)>0} p(a \given c) \ da  + \int_{\mathcal{A}_b} p(x \given a,\tilde{b}(a), c) p(a \given c) \ da\\
&= p(x \given b,c) \int_{a \in \mathrm{proj}_A(U^c_i),p(a,b,c)>0} p(a \given c) \ da  + p(x \given b, c) \int_{\mathcal{A}_b} p(a \given c) \ da\\
&= p(x \given b, c) \int_{a \in \mathrm{proj}_A(U^c_i)} p(a \given c) \ da\\
&= p(x\given b,c)\ p(u^c\given c)
\end{align*}
This implies 
$$
p(x\given c,u^c) = p(x\given b,c)\ .
$$
Together with equation~\eqref{eq:help} this leads to
$$
p(x \given a,b,c,u^c) = p(x \given a,b,c) = p(x \given c, u^c)\,.
$$

\end{proof}

\subsection{Proof of Proposition~\ref{prop:counterex}}
\begin{proof}
Define $X$ according to 
$$
X = g(C,U^C) + N_X
$$
where $N_X \sim \mathcal{U}([-0.1,0.1])$ is uniformly distributed with $(N_X, A, B, C)$ being jointly independent. Define $g$ according to
$$
g(c,u^c) = \left\{
\begin{array}{cl}
10 & \text{if } C = c^* \text{ and } u^{c^*} = 1\\  
0 & \text{ otherwise} 
\end{array} \right.
$$
Fix a value $c$ with $p(c)>0$.
We then have for all $a, b$ with $p(a, b, c)>0$ that 
$$
p(x \given a, b, c) = p(x \given c, u^c) = p(x \given a, c) = p(x \given b, c) 
$$
because $U^c$ can be written as a function of $A$ or of $B$. We therefore have that $X \independent A \given B, C$ and $X \independent B \given A, C$. Depending on whether $b$ is in $\mathrm{proj}_B(U^{c^*}_1)$ or not we have $p(x=0 \given b,c^*)=0$ or $p(x=10 \given b,c^*)=0$, respectively. Thus,
\begin{eqnarray*}
&p(x=10 \given b,c^*) \cdot p(x=0 \given b,c^*)& = 0\,\text{, whereas}\\
&p(x=10 \given c^*) \cdot p(x=0 \given c^*)& \neq 0\,.
\end{eqnarray*}
This shows that $X \notindependent B \given C = c^*$.
\end{proof}

\subsection{Proof of Proposition~\ref{prop:pcnoise}}
\begin{proof}
Since the true structure corresponds to a directed acyclic graph, we can find a causal ordering, i.e. a permutation $\pi:\{1, \ldots, p\} \rightarrow \{1, \ldots, p\}$ such that
$$
\PA[]{\pi(i)} \subseteq \{\pi(1), \ldots, \pi(i-1)\}\,.
$$
In this ordering, $\pi(1)$ is a source node and $\pi(p)$ is a sink node.
We can then rewrite the structural equation model in~\eqref{eq:sem} as
$$
X_{\pi(i)} = \tilde f_{\pi(i)}(X_{\pi(1), \ldots, \pi(i-1)}) + N_{\pi(i)}\,, 
$$
where the functions $\tilde f_i$ are the same as $f_i$ except they are constant in the additional input arguments. The statement of the proposition then follows by the following argument: consider a one-dimensional random variable $N$ with mean zero and a (possibly multivariate) random vector $X$ both with path-connected support and a continuous function $f$. Then, the support of the random vector $(X,f(X)+N)$ is path-connected, too.
Indeed, consider two points $(x_0,y_0)$ and $(x_1,y_1)$ from the support of $(X,f(X)+N)$. The path can then be constructed by concatenating three sub-paths: (1) the path between $(x_0,y_0)$ and $(x_0,f(x_0))$ ($N$'s support is path-connected), (2) the path between $(x_0,f(x_0))$ and $(x_1,f(x_1))$ on the graph of $f$ (which is path-connected due to the continuity of $f$) and (3) the path between $(x_1,f(x_1))$ and $(x_1,y_1)$, analogously to (1).

Therefore the statements of Lemma~37 and thus Proposition~28 from \citet{Peters2013anm} remain correct, which proves $(*)$ for noise variables with continuous densities and path-connected support.
\end{proof}

\subsection{Proof of Proposition~\ref{prop:nonconstfunct}}
\begin{proof}
The proof is immediate. Since $p(x_i \given x_j,x_k,x_c) \neq 
p(x_i \given x_j',x_k,x_c)$ (the means are not the same) the statement follows from Lemma~\ref{lem:1}.

In this case, Lemma~37 might not hold but more importantly Proposition~28 does \citep[both from][]{Peters2013anm}. This proves $(*)$.
\end{proof}

\bibliographystyle{plainnat} 
\bibliography{bibliography}
\end{document}